\newtheorem{theorem}{Theorem}[section]
\newtheorem{lemma}[theorem]{Lemma}
\newtheorem{corollary}[theorem]{Corollary}
\newtheorem{remark}{Remark}[section]
\newcommand{\Mat}[2][cccccccccccccccccccccccccccccccccccccc]{\left[ \begin{array}{#1}#2 \\ \end{array} \right]}
\newcommand{\R}{\mathbb{R}}
\newcommand{\C}{\mathbb{C}}
\def\ve{\varepsilon}
\begin{document}
\title[Loss of derivatives for hyperbolic boundary problems]{Loss of derivatives for hyperbolic boundary problems with constant coefficients}
\author{Matthias Eller}
\address{Department of Mathematics and Statistics, Georgetown University, Washington, DC 20057, USA}
\begin{abstract}
 Symmetric hyperbolic systems and constantly hyperbolic systems with constant coefficients and a boundary condition which satisfies a weakened form of the Kreiss-Sakamoto condition are considered. A well-posedness result is established which generalizes a theorem by Chazarain and Piriou for scalar strictly hyperbolic equations and non-characteristic boundaries \cite{cp72}. The proof is based on an explicit solution of the boundary problem by means of the Fourier-Laplace transform. As long as the operator is symmetric, the boundary is allowed to be characteristic.
\end{abstract}
\maketitle
\section{Introduction}
 We consider a hyperbolic system of first order with constant coefficients for a vector-valued function $u=(u_1,u_2,...,u_N)^T$ depending on time $t$ and space $x$. This is a linear system of differential equations of the form
 \begin{equation}\label{1}
  Pu:= A^0\frac{\partial u}{\partial t}(t,x) + \sum_{j=1}^d A^j\frac{\partial u}{\partial x_j}(t,x)  = f(t,x), \qquad (t,x) \in \R^{d+1}_+\equiv \R_t \times \R^d_+
 \end{equation}
 Here $\R^d_+=\{x=(x_1,...,x_d)\;:\; x_d>0\}$ is a half space, the coefficients $A^j$ are $N\times N$ matrices. The system is assumed to be either symmetric hyperbolic in which case the matrices $A^0,A^1,...,A^d$ are Hermitian and the matrix $A^0$ is positive definite, or constantly hyperbolic which is to say that the polynomial
 \[
  \mbox{det}\,P(\tau,\xi) = \mbox{det}\left[A^0\tau + \sum_{j=1}^d A^j \xi_j  \right]
 \]
 has only geometrically regular real roots in $\tau$ of constant multiplicity for all $\xi\in S^{d-1}$. The notion of constant multiplicity is a slight generalization of the concept of strict hyperbolicity where the zeros of the same polynomial are real and simple. In the non-symmetric case the boundary $\partial \R^{d+1}_+=\{(t,x)\in \R^{d+1}\;:\; x_d=0\}$ is assumed to be non-characteristic which is to say that the matrix $A^d$ is non-singular. If the matrix $A^d$ is singular, the boundary $\partial \R^{d+1}_+$ is characteristic.

 The function $f(t,x)$ is a vector-valued function in $\R_+^{d+1}$ with $N$ components. The differential equation (\ref{1}) will be complemented by a boundary condition of the form $Bu(t,y,0) = g(t,y)$ on where $y = (x_1,...,x_{d-1})$, $B$ is a $p\times N$ matrix, and $g$ is vector-valued function in $\R^d$ with $p$ components. In order to obtain the correct number of boundary conditions, that is, to determine $p$, we consider the system of ordinary differential equations
 \begin{equation}\label{2}
  A^d\frac{\partial v}{\partial x_d} = -i\left[A^0(\tau-i\gamma) + \sum_{j=1}^{d-1} A^j\eta_j \right]v
 \end{equation}
 for $x_d>0$ where $(\tau,\gamma,\eta_1,...,\eta_{d-1})$ are real parameters and $i=\sqrt{-1}$. Only exponentially decreasing solution in $x_d$ are of interest since our analysis will take place in the context of square-integrable functions. For fixed $(\tau,\eta,\gamma)\in \R^d \times (0,\infty)$ the solution of this system gives rise to an exponentially growing solution to the original system with $f\equiv 0$ of the form
 \[
  u(t,x) = e^{i(\tau-i\gamma)t + i\eta\cdot y}v(x_d)\;.
 \]
 Here and henceforth $\eta = (\eta_1,\eta_2,...,\eta_{d-1})\in \R^{d-1}$. The boundary condition $B$ will serve to exclude solutions which are both exponentially increasing in time and exponentially decreasing in $x_d$. In the following we will discuss this condition as has been done before in the non-characteristic case, see for example \cite{he63,gav02}.

 Due to homogeneity reasons it will suffice to restrict ourselves to the case $|\tau-i\gamma|^2+|\eta|^2=\tau^2+\gamma^2+|\eta|^2=1$, where $|\cdot|$ denotes the Euclidean norm in the vector space $\C^m$. For brevity we write
 \[
  S^d_+ = \{ (\tau,\eta,\gamma) \in \R^{d+1}\;:\; \tau^2+|\eta|^2+\gamma^2=1,\;\gamma>0\}.
 \]
 Hyperbolicity implies that the matrix
 \begin{equation}\label{7}
  G(\tau-i\gamma,\eta) = -i\left[A^0(\tau-i\gamma) + \sum_{j=1}^{d-1} A^j\eta_j \right]
 \end{equation}
 is non-singular for all $(\tau,\eta,\gamma)\in S^d_+$ and that the matrix $A^d$ is diagonalizable with real eigenvalues. If $A^d$ is singular, then equation (\ref{2}) is a singular system of ordinary differential equations. For a comprehensive theory on singular systems we refer to the book by Campbell \cite{campbell80}. Since we are interested in well-posedness results for hyperbolic systems in the $L_2$-framework, we will be interested in square integrable solutions to equation (\ref{2}). The matrix pair $(A^d,G)$ is said to be regular, if there exists a pair $(\alpha,\beta)$ of complex numbers such that $\mbox{det}[\alpha A^d -\beta G]\neq 0$. Note that the matrix pair $(A^d,G)$ is regular since $G$ is invertible. A vector $z\in \C^N$ is an eigenvector with eigenpair $(\alpha,\beta)$ if
 \begin{equation}\label{18}
  \alpha A^d z = \beta G z\;.
 \end{equation}
 If $(\alpha,\beta)$ satisfies equation (\ref{18}), so does $(s\alpha,s\beta)$ for all $s\in \C$. Corresponding to each eigenpair $(\alpha,\beta)$ we have the eigenvalue $\lambda = \alpha/\beta$ and we write $\lambda =\infty$ for $(\alpha,\beta)=(1,0)$.

 The regular pair $(A^d,G)$ has $N$ eigenvalues counting multiplicities and there exists a basis for $\C^N$ consisting  of generalized eigenvectors \cite[p. 275]{stewartsun90}.  Let $E^s$ be the stable subspace of $\C^N$ corresponding to all eigenvalues $\lambda$ with $\Re \lambda <0$, let $E^u$ be the unstable subspace corresponding to all eigenvalues of the form $\lambda$ with $\Re \lambda>0$, and let $E^c$ be the central subspace corresponding to all infinite eigenvalues. Note that $E^c = N(A^d)$ is the null space of $A^d$ since $A^d$ is diagonalizable. These three subspaces are invariant subspaces of the matrix $G^{-1}A^d$.

 Since $P$ is hyperbolic, there are no purely imaginary eigenvalues. Hence, the dimensions of the stable subspace and the unstable subspace are independent of $(\tau,\eta,\gamma)\in S^d_+$ and we define $\mu = \mbox{dim}\, E^s$. Connecting this analysis with the argument above we realize that we need $p=\mu$ and that $Bv=0$ for $v\in E^s$ implies $v=0$. 

 A uniform version of this condition in the non-characteristic case is the Kreiss-Sakamoto (uniform Lopatinskii) condition. A boundary operator $B$ is a $\mu\times N$ matrix such that
 \begin{equation}\label{6}
  |Bv| \gtrsim |v|
 \end{equation}
 for all $v\in E^s(\tau-i\gamma,\eta)$ where $(\tau,\eta,\gamma)\in S^d_+$. Here and henceforth $a\lesssim b$ means $a\le Cb$ for some constant $C$ independent of $(\tau,\gamma,\eta)$. Note that this condition implies that $B$ has rank $\mu$. Already in 1970, Kreiss and Ralston showed that the hyperbolic boundary problem
 \begin{equation}\label{3}
  Pu = f \mbox{ in } \R^{d+1}_+,\qquad Bu = g \mbox{ in } \{x_d=0\}\;,
 \end{equation}
 is strongly well-posed in $L_2$ for strictly hyperbolic $P$ in the non-characteristic case if and only if $B$ satisfies the Kreiss-Sakamoto condition \cite{kr70,ra71}. Strong well-posedness in $L_2$ means that for functions $f\in L_2(\R^{d+1}_+)$,  $g\in L_2(\R^d)$ which vanish for $t<0$, there exists a unique weak solution $u\in L_{2,loc}(\R^{d+1}_+)$ vanishing for $t<0$ and the estimate
 \begin{multline}\label{4}
  \gamma \int_{\R^{d+1}_+}e^{-2\gamma t} |u(t,x)|^2 dtdx + \int_{R^d}e^{-2\gamma t}|A^d u(t,y,0)|^2dtdy \\ \lesssim \frac{1}{\gamma}\int_{R^{d+1}_+}e^{-2\gamma t}|f(t,x)|^2\,dtdx + \int_{\R^d}e^{-2\gamma t}|g(t,y)|^2\,dtdy
 \end{multline}
 holds for $\gamma$ sufficiently large. This estimate includes the additional regularity result $e^{-\gamma t} A^d u(t,y,0) \in L_2(\R^d)$. Of course, in the non-characteristic case the matrix $A^d$ in the second integral on the left-hand side may be omitted.

 Sakamoto obtained a similar result for scalar strictly hyperbolic equations of arbitrary order \cite{sakamoto82}. Many interesting boundary conditions, for example the Neumann boundary condition or the oblique derivative condition for the scalar wave equation, do not satisfy the Kreiss-Sakamoto condition. These boundary conditions satisfy the Lopatinskii condition: $Bv=0$ implies $v=0$ for all $v\in E^s(\tau-i\gamma,\eta)$ where $(\tau,\eta,\gamma)\in S^d_+$ and $N(A^d)\subset N(B)$. If the Lopatinskii condition holds, then the hyperbolic boundary problem has a unique solution, at least in the non-characteristic case \cite{he63}. However, this does not imply an estimate of the form (\ref{4}).

In this note we will investigate a weakened form of the Kreiss-Sakamoto condition. We say that the $\mu\times N$ matrix $B$ satisfies the Kreiss-Sakamoto condition with power $s$ if $N(A^d)\subset N(B)$ and
 \begin{equation}\label{5}
  |Bv| \gtrsim \gamma^s|A^d v|
 \end{equation}
 for all $v\in E^s(\tau-i\gamma,\eta)$ where $(\tau,\eta,\gamma)\in S^d_+$ and some $s \in [0,1]$. In the case $s =0$ this condition is equivalent to Assumption 1.3. in the classical paper by Majda and Osher \cite{mo75} on the characteristic problem. We will show that our condition results in a well-posedness result, albeit with a loss of regularity. Our main result is the following theorem.
 \begin{theorem}\label{A}
  Consider the hyperbolic boundary problem (\ref{3}) where $P$ is either symmetric hyperbolic or constantly hyperbolic with non-singular $A^d$. Then, the following two statements are equivalent. \\
  (i) The boundary operator $B$ satisfies the Kreiss-Sakamoto condition with power $s$. \\
  (ii) For $f\in L_2(\R_+,H^s(\R^d))$ and $g\in H^s(\R^d)$ supported in $t\ge 0$, there exists a unique weak solution $u\in L_{2,loc}(\R^{d+1}_+)$ to the boundary problem (\ref{3}), satisfying $A^d u(0) \in L_{2,loc}(\R^d)$ and vanishing for $t<0$, and the estimate
  \begin{equation}\label{10}
   \gamma \|e^{-\gamma t}u\|^2_0 + |e^{-\gamma t}A^d u(0)|^2_0 \lesssim \frac{1}{\gamma^{1+2s}}\|e^{-\gamma t}f\|^2_{s,\gamma} + \frac{1}{\gamma^{2s}}|e^{-\gamma t}g|^2_{s,\gamma}
  \end{equation}
  holds for $\gamma$ sufficiently large.
 \end{theorem}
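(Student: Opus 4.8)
After multiplying (\ref{3}) by $e^{-\gamma t}$ and applying the Fourier--Laplace transform in the tangential variables $(t,y)$, with dual variables $(\tau-i\gamma,\eta)$, the problem becomes, for each $(\tau,\eta)\in\R^d$ and $\gamma>0$, the boundary value problem
\begin{equation*}
  A^d\frac{d\hat u}{dx_d}=G(\tau-i\gamma,\eta)\hat u+\hat f\quad (x_d>0),\qquad B\hat u(0)=\hat g,
\end{equation*}
with $\hat u$ sought in $L_2(0,\infty)$. I would use the splitting $\C^N=E^s\oplus E^u\oplus E^c$, $E^c=N(A^d)$: the $E^c$-component $\hat u_c$ of $\hat u$ is determined algebraically from $\hat f$, and the assumption $N(A^d)\subset N(B)$ --- equivalently $B=CA^d$ for a $\mu\times N$ matrix $C$ --- makes $\hat u_c$ disappear from the boundary relation; on $E^s\oplus E^u$ the system reduces to a genuine ODE whose coefficient matrix $M$ has spectrum off the imaginary axis, split by the sign of $\Re\lambda$, so that $\hat u\in L_2(0,\infty)$ fixes the $E^u$-datum at $x_d=0$ through Duhamel's formula over $(x_d,\infty)$ and leaves only $w:=\hat u_s(0)\in E^s$ free. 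One then solves $Bw=\hat g-B\hat u_u(0)$ for $w$: since $E^s\cap N(A^d)=\{0\}$, $A^d$ is injective on $E^s$, and (\ref{5}) determines $w$ uniquely, with $|A^dw|\lesssim\gamma^{-s}(|\hat g|+\cdots)$.

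\textbf{From (i) to (ii): the estimate, regularity, and uniqueness.} By the homogeneity noted above, rescaling $x_d\mapsto\Lambda x_d$ with $\Lambda:=(\tau^2+|\eta|^2+\gamma^2)^{1/2}$ puts the coefficients on $S^d_+$; combining the uniform bounds for the solution operator of the rescaled system over $S^d_+$ --- in particular the behaviour of $E^s$ and of $A^d|_{E^s}$ as $\gamma/\Lambda\to 0^+$ --- with (\ref{5}) yields, for every $(\tau,\eta,\gamma)$, the pointwise bound
\begin{equation*}
  \gamma\int_0^\infty|\hat u|^2\,dx_d+|A^d\hat u(0)|^2\ \lesssim\ \frac{\Lambda^{2s}}{\gamma^{1+2s}}\int_0^\infty|\hat f|^2\,dx_d+\frac{\Lambda^{2s}}{\gamma^{2s}}|\hat g|^2 .
\end{equation*}
Integration in $(\tau,\eta)$ and Plancherel give (\ref{10}); the function $u$ so constructed lies in $L_{2,\mathrm{loc}}(\R^{d+1}_+)$, has $A^du(0)\in L_{2,\mathrm{loc}}(\R^d)$, and vanishes for $t<0$ by the Paley--Wiener theorem. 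For uniqueness, a weak solution with vanishing data has, after transform, $\hat u_c\equiv 0$ and $\hat u_s$ a stable exponential with $Bw=CA^dw=0$; by (\ref{5}) and the injectivity of $A^d$ on $E^s$ this forces $w=0$, hence $u\equiv 0$.

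\textbf{From (ii) to (i).} I would argue by contraposition. If $N(A^d)\not\subset N(B)$, then $Bu(0)$ is not expressible through $A^du(0)$; choosing $f$ whose transformed $E^c$-component has no trace at $x_d=0$, the problem (\ref{3}) admits no weak solution with $A^du(0)\in L_{2,\mathrm{loc}}$ for generic $g$, contradicting (ii). If instead $N(A^d)\subset N(B)$ but (\ref{5}) fails, pick $(\tau_n,\eta_n,\gamma_n)\in S^d_+$ and $v_n\in E^s(\tau_n-i\gamma_n,\eta_n)$ with $|A^dv_n|=1$ and $|Bv_n|\le n^{-1}\gamma_n^s$; frequency-localizing the exponential solutions $e^{i\Lambda[(\tau_n-i\gamma_n)t+\eta_n\cdot y]}v_n(\Lambda x_d)$ of $Pu=0$ over a dyadic range of scales $\Lambda$ produces, in the manner of Kreiss--Ralston and Majda--Osher \cite{kr70,ra71,mo75}, solutions with boundary data $g$ so small relative to the left-hand side that the weakened estimate (\ref{10}), with its factors $\gamma^{-1-2s}$ and $\gamma^{-2s}$, is still violated; here the exponents of $\gamma_n$ are exactly those for which this remains a contradiction.

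\textbf{Main obstacle.} The heart of the matter is the uniform analysis, over the non-compact set $S^d_+$ up to $\gamma=0$, of the stable subspace and of the solution operator of the singular system: one must show that the degeneration of $B|_{E^s}$ that (\ref{5}) permits near the characteristic and glancing directions is matched rate for rate by the factor $\gamma^s$, so that no loss worse than $\gamma^{-2s}$ is incurred. In the constantly hyperbolic, non-strict case this forces one to handle Jordan blocks of the reduced coefficient matrix $M$; in the symmetric case, when $A^d$ is singular, it is precisely the symmetry that both puts the singular system in index-one form and supplies the control of the trace $A^du(0)$ that would otherwise be lost.
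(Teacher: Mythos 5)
Your overall strategy---multiply by $e^{-\gamma t}$, take the tangential Fourier--Laplace transform, solve the transformed differential-algebraic system explicitly, prove a pointwise-in-frequency estimate and conclude by Plancherel---is the same as the paper's, and your target bound is exactly the paper's estimate (\ref{12}). The gap is that the step carrying all of the difficulty is asserted rather than proved: you write that ``the uniform bounds for the solution operator of the rescaled system over $S^d_+$'' combined with (\ref{5}) ``yield'' the pointwise bound, but your construction solves the system componentwise along $\C^N=E^s\oplus E^u\oplus E^c$ and therefore uses the spectral projections of the pencil $(A^d,G)$. These projections are \emph{not} uniformly bounded as the rescaled parameter $\gamma/(\tau^2+|\eta|^2+\gamma^2)^{1/2}\to0$ (the stable and unstable subspaces collide at glancing frequencies), and since the estimate must hold at fixed $\gamma$ uniformly in all $(\tau,\eta)\in\R^d$, this regime cannot be avoided. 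Consequently neither your interior bound with the sharp factor $1/\gamma$ nor the claimed bound $|A^dw|\lesssim\gamma^{-s}(|\hat g|+\cdots)$ follows from the dichotomy/Duhamel argument you sketch: done naively, both degrade by the unbounded condition number of the spectral decomposition. You correctly identify this as ``the main obstacle,'' but the proposal does not resolve it.

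For comparison, the paper resolves it with two devices that are absent from your outline. First, the particular solution is built by the full Fourier transform in $x_d$ together with the resolvent bound $|(i\xi_dA^d-G)^{-1}|_S\lesssim 1/\gamma$ (no spectral projections), and the traces are controlled by Lemma \ref{F}: an integration-by-parts energy identity in the symmetric case, and in the constantly hyperbolic case the M\'etivier--Zumbrun symmetrizer together with the continuous extension of $E^s$ to $\overline{S}^d_+$, which allows a \emph{bounded} projector $Q$ onto $E^s$ in place of $\Pi_{E^s}$. These estimates lose only $\gamma^{-1}\|\hat f\|^2$ and leave boundary terms ($|A_+U_+(0)|$, resp. $|QU(0)|$) that satisfy the Kreiss--Sakamoto condition with power $0$. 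Second, since $U(0)$ itself need not lie in $E^s$, condition (\ref{5}) cannot be applied to it directly; the paper introduces an auxiliary homogeneous solution $V$ with $V(0)\in E^s$ having the same ``good'' trace as $U$, applies the homogeneous form (\ref{33}) of (\ref{5}) to $V(0)$, and controls $U-V$ via Lemma \ref{F} together with $|BW|\lesssim|A^dW|$, which follows from $N(A^d)\subset N(B)$. Without these ingredients (or a full Kreiss-symmetrizer construction adapted to the weakened condition), the passage from (\ref{5}) to the pointwise estimate is a genuine gap. Your reduction using $B=CA^d$ and the algebraic elimination of the $E^c$-component is fine but is not where the difficulty lies; and the converse direction (ii)$\Rightarrow$(i), which you sketch by contraposition, is not proved in the paper either and is standard, so it is not at issue.
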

 Here we use the Sobolev norms
 \[
  \begin{split}
   |g|^2_{s,\gamma} &= \int_{\R^d}(\gamma^2+|\eta|^2+\tau^2)^s |\hat{g}(\tau,\eta)|^2d\tau d\eta \\
   \|f\|^2_{s,\gamma}&=\int_0^\infty \int_{\R^d}(\gamma^2+|\eta|^2+\tau^2)^s |\hat{f}(\tau,\eta,x_d)|^2d\tau d\eta dx_d
  \end{split}
 \]
 for $\gamma>0$, where $\hat{g}(\tau,\eta)$ is the Fourier transform of $g(t,y)$ and $\hat{f}(\tau,\eta,x_d)$ is the tangential Fourier transform of $f(t,y,x_d)$, that is the Fourier transform with respect to $(\tau,y)$. The norms $\|\cdot\|_0$ and $|\cdot|_0$ are the $L_2$-norms in the half space $\R^{d+1}_+$ and its boundary $\{x_d=0\}$, respectively. Comparing the estimate (\ref{10}) with (\ref{4}) one notices the higher norms on the right-hand side in (\ref{10}). The solution of the boundary problem is only of regularity $L_2$ in the interior and on the boundary where the data $f$ and $g$ have $s$ (tangential) derivatives. Hence, the estimate (\ref{10}) is characterized by the loss of $s$ derivatives in the interior and along the boundary.

 For the wave equation with Neumann boundary condition we will show that $s=1/2$ whereas for the oblique derivative problem one has $s=1$, see also \cite[Section 11]{cp72}. In this connection we like to mention that in the case of this Neumann problem (and for conservative boundary conditions in general) the loss of derivatives in the interior can be avoided. There are estimates similar to (\ref{10}) where $f$ and $u$ have the same regularity \cite[Theorems 1.2,1.3]{el12}. In contrast to our work, these results hold even in the case of variable coefficients but a loss of one derivative occurs on the boundary .

 Theorem \ref{A} complements an earlier result given by Chazarain and Piriou for scalar strictly hyperbolic equations in the non-characteristic case \cite{cp72}. To our best knowledge, the only result in this direction for systems has been proved by Coulombel in the case $s=1$ under rather restrictive additional assumptions \cite[Theorem 2.1]{coulombel04}. This is due to the fact that Coulombel bases his analysis on the Kreiss symmetrizer. We obtain a less restrictive result here by solving the constant coefficient problem by means of the Fourier-Laplace transform.

 However, in contrast to Coulombel, our method does not carry over to variable coefficients, see \cite[Theorem 3.2]{coulombel04}. Furthermore, our method does not allow terms of zero order be incorporated into the operator $P$.  Nevertheless we expect that Theorem \ref{A} remains true in the special case of constantly hyperbolic operators with non-characteristic boundary, where the Kreiss-Sakamoto condition (\ref{6}) is violated only in the hyperbolic region. This is the set of frequencies $(\tau,\eta)\in S^{d-1}$ where the matrix $[A^d]^{-1}G(\tau,\eta)$ is diagonalizable with purely imaginary eigenvalues. For a thorough discussion of the hyperbolic region we refer to a recent work by Coulombel \cite{co11}. These boundary problems have been characterized as {\it weakly regular of real type} by Benzoni-Gavage, Rousset, Serre, and Zumbrun \cite[Section 2.4]{gav02}.

 Even in the case of the Kreiss-Sakamoto condition ($s=0$), Theorem \ref{A} produces $L_2$ well-posedness for symmetric hyperbolic problems.
 \begin{corollary}\label{B}
  Suppose that $P$ is symmetric hyperbolic and that the boundary operator $B$ satisfies the Kreiss-Sakamoto condition (\ref{6}). Then the boundary problem is well-posed in $L_2$, i.e.  for $f\in L_2(\R^{d+1}_+)$ and $g\in L^2(\R^d)$ vanishing for $t<0$, there exists a unique solutions $u\in L_{2,loc}(\R^{d+1}_+)$, vanishing for $t<0$ and the estimate (\ref{4}) holds.
 \end{corollary}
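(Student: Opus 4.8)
The plan is to obtain Corollary \ref{B} as the special case $s=0$ of Theorem \ref{A}. When $s=0$ every object in statement (ii) of that theorem collapses to its unweighted $L_2$ counterpart: $L_2(\R_+,H^0(\R^d))$ is $L_2(\R^{d+1}_+)$, $H^0(\R^d)$ is $L_2(\R^d)$, the weights $(\gamma^2+|\eta|^2+\tau^2)^s$ appearing in the norms $\|\cdot\|_{s,\gamma}$ and $|\cdot|_{s,\gamma}$ are identically $1$, so by Plancherel these norms coincide with $\|\cdot\|_0$ and $|\cdot|_0$, and the powers of $\gamma$ on the right of (\ref{10}) become $\gamma^{1+2s}=\gamma$ and $\gamma^{2s}=1$. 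Thus, for $s=0$, estimate (\ref{10}) is word for word the estimate (\ref{4}), and the conclusion of Theorem \ref{A}(ii) is precisely the conclusion of Corollary \ref{B}. It therefore suffices to verify that, under the hypotheses of the corollary, the boundary operator $B$ satisfies statement (i) of Theorem \ref{A} with $s=0$.

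Statement (i) with $s=0$ requires two things of $B$: the algebraic inclusion $N(A^d)\subset N(B)$, and the uniform bound $|Bv|\gtrsim|A^d v|$ for all $v\in E^s(\tau-i\gamma,\eta)$ with $(\tau,\eta,\gamma)\in S^d_+$. The bound is immediate from the Kreiss--Sakamoto condition (\ref{6}): since $A^d$ is a fixed matrix, $|A^d v|\le\|A^d\|\,|v|$, whence $|Bv|\gtrsim|v|\gtrsim|A^d v|$, and the implied constant is independent of $(\tau,\gamma,\eta)$ because the constant in (\ref{6}) is and $\|A^d\|$ is a fixed number. As for $N(A^d)\subset N(B)$: in the non-characteristic case $N(A^d)=\{0\}$ and there is nothing to check, while in the characteristic symmetric case this inclusion is intrinsic to the problem, since only $A^d u(0)$ (and not $u(0)$ itself) is guaranteed a trace in $L_2$, so that $Bu(0)=g$ is a meaningful constraint on $L_2$ solutions exactly when $B$ annihilates $N(A^d)$; accordingly it belongs to the standing description of admissible boundary operators, in agreement with Assumption 1.3 of Majda--Osher \cite{mo75}. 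Having verified (i) with $s=0$, Theorem \ref{A} delivers the unique weak solution $u\in L_{2,loc}(\R^{d+1}_+)$ with $A^d u(0)\in L_{2,loc}(\R^d)$, vanishing for $t<0$, together with (\ref{4}).

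I do not expect a genuine obstacle here, since the corollary is a direct specialization; the only points deserving attention are that the reduction $|A^d v|\lesssim|Bv|$ preserves uniformity in the frequency parameters (it does, because $A^d$ has constant coefficients) and that $s=0$ is an admissible value of the power (it is, since $s$ ranges over $[0,1]$). It is perhaps worth adding the remark that when $A^d$ is non-singular one has $|e^{-\gamma t}A^d u(0)|_0\gtrsim|e^{-\gamma t}u(0)|_0$, so that (\ref{4}) then contains the usual full trace estimate on the boundary.
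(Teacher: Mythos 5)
Your proposal is correct and is essentially the paper's own argument: the corollary is obtained simply by specializing Theorem \ref{A} to $s=0$, observing that estimate (\ref{10}) then reduces to (\ref{4}) and that (\ref{6}) yields $|Bv|\gtrsim|v|\gtrsim|A^d v|$ with constants independent of $(\tau,\eta,\gamma)$. The one delicate point, the inclusion $N(A^d)\subset N(B)$ required in the characteristic case, is indeed not a consequence of (\ref{6}) but a standing admissibility assumption on the boundary operator (as in Majda--Osher's Assumption 1.3), which is exactly how you treat it and how the paper implicitly reads its own statement.
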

 This result has been established in the book by Serre \cite{serre00} in the case $f\equiv 0$ and only rather recently in the non-characteristic case  by Gu\`es, M\'etivier, Williams, and Zumbrun \cite{gmwz07} where this result is discussed in detail. This corollary extends their result to the characteristic case. We like to point out that the classical work on the characteristic boundary value problem by Majda and Osher requires some additional assumptions \cite[Section 1]{mo75}. Our result shows that these assumptions are not needed, at least in the constant coefficient case. On the other hand, the work by Majda and Osher shows that in the characteristic case the operator needs to be symmetric \cite[Exampe B.1]{mo75}.

 For characteristic problem the condition $N(A^d)\subset N(B)$ has been used since the work by Majda and Osher. Ohkubo refers to this condition as reflexive \cite{oh81}. Example B.2 in the work of Majda and Osher \cite[Section 2]{mo75} shows that the estimate (\ref{4}) may not hold if this condition is violated.
 \begin{remark}
  Both Theorem \ref{A} and Corollary \ref{B} remain valid in the case that the matrix $G$ is a homogeneous function in $(\tau,\eta,\gamma)$ of degree one. This is of some relevance when considering higher-order systems or equations since in this case the reduction to a system of first order is performed using Fourier multipliers and not only differential operators. For an example we refer to section \ref{iv}.
 \end{remark}
  Theorem \ref{A} will be proved in Section \ref{iii}. Strictly speaking we prove only that (i) implies (ii). The opposite direction is rather straightforward, see for example \cite{kr70,cp72,gmwz07}. Our proof follows the approach by Chazarain and Piriou given in 1972 up to a certain point. The boundary value problem is solved explicitly  by means of the tangential Fourier transform.  This solution is estimated relying on ideas given in the paper \cite{gmwz07}. It is an interesting fact that Kreiss avoided precisely this approach since it does not generalize to variable coefficients \cite[p.281]{kr70}.

  We feel that the analysis of the constant coefficient problem for hyperbolic systems has its merits. Precise estimates for boundary value problems with boundary conditions where the Kreiss-Sakamoto condition does not hold are obtained. In particular, the example of Maxwell's equations given in section \ref{iv} shows that the regularity statements given by the author in the context of conservative boundary conditions \cite{el12} is sharp. More importantly, a recent preprint by G. M\'etivier contains an example of a symmetric hyperbolic system with variable coefficients and a Kreiss-Sakamoto boundary condition which fails to be well-posed in $L_2$ \cite{me14}.

  The author wishes to thank Professor Kevin Zumbrun (University of Indiana) for pointing out the reference \cite{gmwz07}. Moreover, the author is very grateful and very appreciative to the two anonymous referees who gave a number of helpful comments and pointed out a mistake in the original proof.

\setcounter{equation}{0}
\section{Proof of Theorem \ref{A}}\label{iii}
 Using a density argument it will suffice to work with smooth functions $f$ and $g$ with compact support in $\overline{\R^{d+1}_+}$ and $\R^{d-1}$, respectively which vanish for $t<0$. For the reader's convenience we will structure the proof into three steps. In the first step we perform the Fourier transform in the tangential variables and solve the resulting boundary value problem for an ordinary differential equation (or in the characteristic case a differential-algebraic equation) with parameters. In the following subsection we obtain $L_2$-estimates for these explicit solutions with boundary traces which do not involve the boundary condition $B$ yet. However, these estimates involve a boundary condition which satisfies the Kreiss-Sakamoto condition with power zero and they are uniform with respect to the parameters. At this point we are forced to consider the symmetric case and the constantly hyperbolic case separately. In the final subsection we introduce the boundary condition $B$ and obtain the estimates corresponding to Theorem \ref{A}. In this part we follow largely the approach given in \cite[Prop. 2.6]{gmwz07}
\subsection{An explicit solution to the boundary value problem.}
 For $\gamma>0$ we multiply the differential equation by $e^{-\gamma t}$ and obtain
 \[
  P(e^{-\gamma t} u) + \gamma A^0 e^{-\gamma t}u = e^{-\gamma t} f ,\qquad \mbox{ for } x_d>0\;,
 \]
 and the boundary condition is $B e^{-\gamma t}u = e^{-\gamma t}g$ at $x_d=0$. After performing a Fourier transform in the tangential variables $(t,y)$ one obtains the ordinary differential equation
 \begin{equation}\label{9}
  A^d\frac{\partial \hat{u}}{\partial x_d}(\tau-i\gamma,\eta,x_d) = G(\tau-i\gamma,\eta)\hat{u}(\tau-i\gamma,\eta,x_d) + \hat{f}(\tau-i\gamma,\eta,x_d)
 \end{equation}
 with the initial condition
 \begin{equation}\label{8}
  B\hat{u}(\tau-i\gamma,\eta,0) = \hat{g}(\tau-i\gamma,\eta)\;.
 \end{equation}
 The matrix $G$ has been introduced in formula (\ref{7}). We start by proving the following resolvent type estimate, see \cite[Lemma 2.2]{kr70} For $N\times N$ matrices we will use the spectral norm $|\cdot|_S$.
\begin{lemma}
 Under the assumptions on the hyperbolic operator $P$ stated in the introduction we have
 \[
  |(P -i\gamma  A^0)^{-1}|_S= |(i\xi_d A^d - G(\tau-i\gamma ,\xi_1,...,\xi_{d-1}))^{-1}|_S \lesssim \frac{1}{\gamma}
 \]
 for all $(\tau,\xi) \in \R^{d+1}$ and $\gamma>0$.
\end{lemma}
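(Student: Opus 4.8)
The plan is to reduce the estimate to a resolvent estimate for a matrix with real spectrum, and then to use an appropriate symmetrizer in each of the two cases under consideration.

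\emph{Step 1: algebraic reduction.} Using the definition \eqref{7} of $G$ one computes, for $(\tau,\xi)\in\R^{d+1}$ and $\gamma>0$,
\[
 i\xi_d A^d - G(\tau-i\gamma,\xi_1,\dots,\xi_{d-1})
 = i\Bigl[A^0(\tau-i\gamma)+\sum_{j=1}^d A^j\xi_j\Bigr]
 = i\,A^0\bigl(\sigma I+\mathcal B(\xi)\bigr),
\]
where $\sigma:=\tau-i\gamma$, $\mathcal B(\xi):=(A^0)^{-1}\sum_{j=1}^d A^j\xi_j$, and $A^0$ is invertible (positive definite in the symmetric case). Consequently $|(i\xi_d A^d-G)^{-1}|_S\le|(A^0)^{-1}|_S\,|(\sigma I+\mathcal B(\xi))^{-1}|_S$, so it suffices to prove $|(\sigma I+\mathcal B(\xi))^{-1}|_S\lesssim1/\gamma$. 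The eigenvalues of $\mathcal B(\xi)$ are the negatives of the roots in $\tau$ of $\det P(\tau,\xi)$, hence real by hyperbolicity. Since $(\sigma I+\mathcal B(\xi))^{-1}$ is homogeneous of degree $-1$ in $(\tau,\xi,\gamma)$, it is enough to treat $|\xi|\le1$, the case $\xi=0$ being immediate as then $|\sigma|^{-1}\le\gamma^{-1}$.

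\emph{Step 2: the symmetric case.} Here $C(\xi):=\sum_{j=1}^d A^j\xi_j$ is Hermitian and $A^0$ is positive definite, so $\mathcal B(\xi)=(A^0)^{-1}C(\xi)$ is self-adjoint for the inner product $\lg u,v\rg_{A^0}:=\lg A^0u,v\rg$, whose norm is uniformly equivalent to $|\cdot|$. For $w\in\C^N$ the number $\lg\mathcal B(\xi)w,w\rg_{A^0}$ is real, hence
\[
 \gamma\,\|w\|_{A^0}^2=\bigl|\mathrm{Im}\,\lg(\sigma I+\mathcal B(\xi))w,w\rg_{A^0}\bigr|\le\|(\sigma I+\mathcal B(\xi))w\|_{A^0}\,\|w\|_{A^0},
\]
which yields $|(\sigma I+\mathcal B(\xi))^{-1}v|\lesssim\gamma^{-1}|v|$ with a constant depending only on $A^0$. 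Note that invertibility of $A^d$ is never used, which is precisely why the symmetric case tolerates a characteristic boundary.

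\emph{Step 3: the constantly hyperbolic case.} Now $\mathcal B(\xi)$ need not have a constant symmetrizer, but constant hyperbolicity (geometric regularity together with constant multiplicity of the real roots) guarantees that $\mathcal B(\xi)$ is \emph{uniformly} diagonalizable on the compact set $\{|\xi|=1\}$: there are invertible matrices $T(\xi)$, continuous in $\xi$, with $C_0:=\sup_{|\xi|=1}\bigl(|T(\xi)|_S+|T(\xi)^{-1}|_S\bigr)<\infty$ and $T(\xi)^{-1}\mathcal B(\xi)T(\xi)=\mathrm{diag}(\beta_1(\xi),\dots,\beta_N(\xi))$ with all $\beta_k(\xi)$ real; equivalently, $\mathcal B(\xi)$ carries a uniformly bounded Hermitian positive definite symmetrizer (see \cite{gav02}). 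Then
\[
 (\sigma I+\mathcal B(\xi))^{-1}=T(\xi)\,\mathrm{diag}\bigl((\sigma+\beta_k(\xi))^{-1}\bigr)\,T(\xi)^{-1},\qquad|\sigma+\beta_k(\xi)|\ge|\mathrm{Im}\,\sigma|=\gamma,
\]
so $|(\sigma I+\mathcal B(\xi))^{-1}|_S\le C_0^2/\gamma$, and Step 1 finishes the proof.

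\emph{Main obstacle.} The only nontrivial ingredient is the uniform diagonalization of Step 3. For strictly hyperbolic $P$ it is elementary: the roots are real and simple, so the spectral projections of $\mathcal B(\xi)$ depend continuously on $\xi$ and are bounded on the sphere by compactness. Under the weaker constant-multiplicity hypothesis one must invoke the structure theory of constantly hyperbolic symbols to know that the real roots are semisimple with spectral projections bounded uniformly on $\{|\xi|=1\}$ --- this is the role of ``geometrically regular'' in the standing assumptions, and it is the step I expect to require the most care.
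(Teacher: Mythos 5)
Your proposal is correct and takes essentially the same route as the paper: reduce to the matrix $\sigma I+(A^0)^{-1}\sum_{j=1}^d A^j\xi_j$, treat the symmetric case via the Hermitian structure of the symbol (the paper normalizes $A^0=I_N$ and diagonalizes unitarily, while you use the $A^0$-weighted numerical range --- the same estimate), and treat the constantly hyperbolic case by uniformly bounded diagonalization with real eigenvalues, exactly as the paper does. The only difference is cosmetic, so no further comparison is needed.
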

\begin{proof}
 In the symmetric hyperbolic case we may assume without loss of generality that $A^0=I_N$, the $N\times N$ identity matrix. Otherwise we premultiply and postmultiply the operator by the inverse of the Hermitian square root of $A^0$. There exists a unitary matrix $Q=Q(\xi)$ which diagonalizes $P$,
\[
  Q^H [ P -i\gamma I_N ] Q = \Mat{ \tau - i\gamma +\tau_1(\xi) & 0 & \cdots & 0 \\ 0 & \tau-i\gamma+\tau_2(\xi) & \cdots & 0 \\  \vdots & \vdots & \ddots & \vdots \\ 0 & 0& \cdots & \tau-i\gamma + \tau_N(\xi) }\;,
\]
where the $\tau_j(\xi)$ are real-valued for $j=1,2,..,N$. Of course, the matrix $Q$ may not be smooth. Nevertheless, all eigenvalues of $(P -i\gamma I_N) (P -i\gamma I_N)^H$ are greater or equal to $\gamma^2$. Hence all eigenvalues of the $(P -i\gamma I_N)^{-H} (P -i\gamma I_N)^{-1}$ are less or equal to $\gamma^{-2}$. Here $A^{-H}$ denotes the inverse of the Hermitian transpose.

In the constantly hyperbolic case, the matrix $P$ premultiplied by $A^0$ can be diagonalized by means of a smooth matrix $Q(\xi)$ which is homogeneous of degree zero in $\xi$. This implies that the matrix $Q$ and its inverse have a spectral norm which is bounded below and above, independent of $\xi$. Hence, the proof in this case is very similar to the symmetric case and details will be omitted.
\end{proof}

For brevity we will drop the parameter $\tau-i\gamma$ and $\eta$, even though we keep in mind that the solution we construct will depend smoothly on $(\tau,\eta,\gamma)$.  A square integrable solution $U_1$ to the differential equation (\ref{9}) is given by
 \begin{equation}\label{11}
  U_1(x_d) = \int_{\R}e^{ix_d\xi_d}(i\xi_d A^d - G)^{-1} \tilde{f}(\xi_d) d\xi_d\;,
 \end{equation}
 where $\tilde{f}$ denotes the Fourier transform of the function $\hat{f}$ with respect to $x_d$. Note that the Lemma above shows in connection with Parseval's identity the estimate
 \[
  \|U_1\|_{L_2(\R_+)} \lesssim \frac{1}{\gamma} \|\hat{f}\|_{L_2(\R_+)}
 \]
 uniformly in $(\tau,\eta,\gamma)\in \R^d_{\tau,\eta}\times \R_+$.

 In order to obtain a function $U$ which satisfies the differential equation (\ref{9}) and the initial condition (\ref{8}), we define the matrices
 \[
   \Pi_{E^s}=\frac{1}{2\pi i} \int_{\Gamma_-}(\zeta A^d - G)^{-1}A^d d\zeta\quad  \mbox{ and } \quad J = B\Pi_{E^s}\;.
 \]
 Here $\Gamma_-$ is a smooth, simple, closed, and positively oriented curve in the complex plane which encloses all eigenvalues of the matrix pair $(A^d,G)$ with a negative real part and excludes all eigenvalues with positive real part.
 The first matrix is the spectral projection onto the stable subspace $E^s$ of the matrix pair $(A^d,G)$ and the latter one is of rank $\mu$ since the matrix $B$ satisfies the Lopatinskii condition. Set
 \begin{equation}\label{13}
  U(x_d) = U_1(x_d) + U_2(x_d) \;,
 \end{equation}
 where
 \begin{equation}\label{31}
  U_2(x_d) =  \frac{1}{2\pi i} \int_{\Gamma_-}e^{x_d\zeta}(\zeta A^d - G)^{-1}A^d d\zeta J^H(JJ^H)^{-1}[\hat{g}-BU_1(0)]
 \end{equation}
 and $J^H$ is the Hermitian transpose of $J$. One checks that $U_2$ is a solution to the equation (\ref{9}) with $\hat{f}\equiv 0$ and that
 \begin{multline*}
  BU(0) = BU_1(0) + BU_2(0) = BVU_1(0) + J[J^H(JJ^H)^{-1}(\hat{g}-BU_1(0))]\\ = BU_1(0)+\hat{g}-  BU_1(0)=\hat{g} \;.
 \end{multline*}
 Observe that $U \in L_2(\R_+)$ since $U_1\in L_2(\R_+)$ and $U_2$ is exponentially decaying as $x_d \to \infty$. Furthermore, since $G$, $\hat{f}$, and $\hat{g}$ are analytic functions in the complex variable $\tau-i\gamma$ on the lower half plane, the same is true for our solution $U$.

 This function $U$ is the only square integrable weak solution to the boundary value problem (\ref{9}), (\ref{8}).
 Suppose that $Y \in L_2(\R_+)$ is another solution. Then, the difference $Z=U-Y$ is square integrable and solves the initial value problem (\ref{9}), (\ref{8}) with $\hat{f}\equiv 0$ and $\hat{g}=0$. Hence, $Z(x_d) = (2\pi i)^{-1}\int_{\Gamma_-}e^{\zeta x_d}(\xi A^d-G)^{-1}A^d d\zeta Z(0)$ and $Z(0) \in E^s$. Since $BZ(0) = 0$, the Lopatinskii condition gives $Z(0)=0$, and hence $Z\equiv 0$.

\subsection{$L_2(\R_+^d)$ estimates with boundary traces.} The next lemma gives  $L_2$ estimates for the solutions constructed above. We have to distinguish two cases: One is for symmetric hyperbolic systems and the other for constantly hyperbolic systems. In the symmetric case we may assume that $A^d$ has the format
\[
 A^d = \left[ \begin{matrix} 0  & 0& 0 \\ 0 & A_+ & 0 \\ 0&0& -A_-  \end{matrix} \right]
\]
where $A_+$ and $A_-$ are Hermitian positive definite matrices, the matrix $A_+$ has rank $\mu$. Correspondingly, any vector $U\in \C^N$ can be written as $U = (U_0,U_+,U_-)^T$.

In the constantly hyperbolic case we use the fact that the stable subspace $E^s(\tau - i\gamma,\eta)$ is a smooth vector bundle for $(\tau,\eta,\gamma) \in S^d_+$ with a continuous extension to $\overline{S}^d_+$. This fact is proved in the strictly hyperbolic case in \cite[Chapter 7, Theorem 3.5]{chazarainpiriou82} and extends to the constantly hyperbolic case in view of M\'etivier's work \cite{me00}, see also \cite[Corollary 5.2]{mz05}. Hence, there exists a continuous decomposition
\[
 \C^N = E^s(\tau-i\gamma,\eta) \oplus E(\tau-i\gamma,\eta)
\]
for all $(\tau,\eta,\gamma) \in \overline{S}^d_+$ and the projector $Q=Q(\tau-i\gamma,\eta)$ onto $E^s$ parallel to $E$ is bounded. In contrast, the spectral projection $\Pi_{E^s}$ is not necessarily bounded as $\gamma\to 0$ since the stable subspace and the unstable subspace may collide as $\gamma$ approaches zero.
\begin{lemma}\label{F}
 Suppose $P$ is symmetric hyperbolic. Then, there exists a constant $\underline{\gamma}>0$ such that for $\gamma\ge \underline{\gamma}$ and $(\tau,\eta)\in \R^d$
 the inequality
 \begin{multline*}
  \gamma \int_0^\infty |U(\tau-i\gamma,\eta,x_d)|^2 dx_d+ |A^dU(\tau-i\gamma,\eta,0)|^2\\ \lesssim \frac{1}{\gamma}\int_0^\infty |\hat{f}(\tau-i\gamma,\eta,x_d)|^2 dx_d + |A_+ U_+(\tau-i\gamma,\eta,0)|^2
 \end{multline*}
 holds. 
 
 In the constantly hyperbolic case, there exists a constant $\underline{\gamma}>0$ such that for $\gamma\ge \underline{\gamma}$ and $(\tau,\eta)\in \R^d$ the inequality
 \begin{multline*}
  \gamma \int_0^\infty |U(\tau-i\gamma,\eta,x_d)|^2 dx_d+ |U(\tau-i\gamma,\eta,0)|^2\\ \lesssim \frac{1}{\gamma}\int_0^\infty |\hat{f}   (\tau-i\gamma,\eta,x_d)|^2 dx_d + |Q U(\tau-i\gamma,\eta,0)|^2
 \end{multline*}
 takes place.
\end{lemma}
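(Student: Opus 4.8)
The two regimes call for different arguments, so I would split the proof into a symmetric part handled by a direct energy identity and a constantly hyperbolic part handled by sharpening the resolvent estimate of the preceding Lemma.

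\textbf{Symmetric case.} After conjugating the operator by the Hermitian square root of $A^0$ and a unitary change of variables one may assume $A^0=I_N$ and $A^d=\mathrm{diag}(0,A_+,-A_-)$ as in the statement, in which coordinates $G(\tau-i\gamma,\eta)+G(\tau-i\gamma,\eta)^H=-2\gamma I_N$. The solution $U=U_1+U_2$ is smooth on $[0,\infty)$; the components $U_+,U_-$ and their derivatives lie in $L_2(\R_+)$ (for $U_\pm'$ this follows from (\ref{9}) since $A_\pm$ is invertible), hence $\langle A^dU,U\rangle=\langle A_+U_+,U_+\rangle-\langle A_-U_-,U_-\rangle$ vanishes as $x_d\to\infty$. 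Differentiating $\langle A^dU,U\rangle$ and using (\ref{9}) together with $G+G^H=-2\gamma I_N$ gives $\frac{d}{dx_d}\langle A^dU,U\rangle=-2\gamma|U|^2+2\Re\langle\hat f,U\rangle$; integration over $(0,\infty)$ yields
\[
 2\gamma\int_0^\infty|U|^2\,dx_d+|A_-^{1/2}U_-(0)|^2=|A_+^{1/2}U_+(0)|^2+2\Re\int_0^\infty\langle\hat f,U\rangle\,dx_d .
\]
Estimating the last term by $\gamma^{-1}\|\hat f\|_{L_2(\R_+)}^2+\gamma\int_0^\infty|U|^2$ and absorbing, and using that the fixed positive definite matrices $A_\pm$ satisfy $|A_\pm^{1/2}v|\simeq|A_\pm v|$ together with $|A^dU(0)|^2=|A_+U_+(0)|^2+|A_-U_-(0)|^2$, produces the asserted inequality; here no lower bound on $\gamma$ is actually needed.

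\textbf{Constantly hyperbolic case.} Now $A^d$ is invertible; set $\mathcal{G}=(A^d)^{-1}G$, so the finite eigenvalues of the pencil are the eigenvalues of $\mathcal{G}$, the homogeneous solutions of (\ref{9}) are $e^{x_d\mathcal{G}}v$, and (applying the Dunford calculus to (\ref{31})) $U_2(x_d)=e^{x_d\mathcal{G}}w_0$ with $w_0\in E^s$. The one substantial ingredient is the \emph{uniform stability of the stable and unstable bundles}: for $(\tau,\eta,\gamma)\in S^d_+$,
\[
 \gamma\int_0^\infty|e^{x_d\mathcal{G}}v|^2\,dx_d\lesssim|v|^2\quad(v\in E^s),\qquad \gamma\int_0^\infty|e^{-x_d\mathcal{G}}v|^2\,dx_d\lesssim|v|^2\quad(v\in E^u),
\]
with constants independent of $(\tau,\eta,\gamma)$. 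I would derive this from the continuous extension of $E^s$ (and, applying the same to the problem on $\{x_d<0\}$, of $E^u$) to $\overline{S}^d_+$ together with the block structure of constantly hyperbolic operators; this is where I expect the real difficulty to sit, since near glancing frequencies the stable eigenvalues approach the imaginary axis as $\gamma\to0^+$ and only the constant-multiplicity hypothesis keeps the bound uniform. Combining the two estimates through Plancherel's theorem (using $(i\xi_dA^d-G)^{-1}=(i\xi_dI-\mathcal{G})^{-1}(A^d)^{-1}$, the invariant splitting $\C^N=E^s\oplus E^u$, and the bounded projector $Q$ onto $E^s$ along $E^u$) gives the $L_2$-refinement of the resolvent bound,
\[
 \int_\R\bigl|(i\xi_dA^d-G)^{-1}\bigr|_S^2\,d\xi_d\ \lesssim\ \frac1\gamma ,
\]
uniformly on $S^d_+$, hence for all $(\tau,\eta)\in\R^d$ and $\gamma>0$ by homogeneity.

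With these tools the estimate assembles quickly. Extending $\hat f$ by zero, $U_1$ from (\ref{11}) solves (\ref{9}) on all of $\R$, belongs to $L_2(\R)$, and satisfies the homogeneous equation on $(-\infty,0)$; the $L_2$ condition there forces $U_1(0)\in E^u$, whence $QU_1(0)=0$, so $w_0=QU(0)$ and $U(0)=U_1(0)+QU(0)$ (this is the point where, as noted after (\ref{31}), one must use the bounded $Q$ rather than the possibly unbounded $\Pi_{E^s}$). The pointwise resolvent bound gives $\gamma\|U_1\|_{L_2(\R_+)}^2\lesssim\gamma^{-1}\|\hat f\|_{L_2(\R_+)}^2$, while Cauchy--Schwarz in $\xi_d$ with the $L_2$-resolvent bound gives $|U_1(0)|^2\le\bigl(\int_\R|(i\xi_dA^d-G)^{-1}|_S^2\,d\xi_d\bigr)\|\tilde f\|_{L_2(\R)}^2\lesssim\gamma^{-1}\|\hat f\|_{L_2(\R_+)}^2$. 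Uniform stability of $E^s$ gives $\gamma\|U_2\|_{L_2(\R_+)}^2\lesssim|w_0|^2=|QU(0)|^2$, and $|U_2(0)|=|QU(0)|$. Adding these four estimates and using $|U(0)|^2\le2|U_1(0)|^2+2|QU(0)|^2$ and $\|U\|^2\le2\|U_1\|^2+2\|U_2\|^2$ yields the claim; the restriction to $\gamma\ge\underline\gamma$ is then immediate. All the genuine difficulty is thus concentrated in the uniform-stability input, and hence in the constant-multiplicity assumption, whereas the symmetric case rests only on the elementary energy identity above.
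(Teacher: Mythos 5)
Your symmetric-case argument is correct and is essentially the paper's own proof: the identity $\frac{d}{dx_d}\langle A^dU,U\rangle=-2\gamma\langle A^0U,U\rangle+2\Re\langle \hat f,U\rangle$ (the paper keeps $A^0$ positive definite rather than normalizing it to $I_N$), integration over $(0,\infty)$, Cauchy--Schwarz and absorption, with $|A_+U_+(0)|^2$ simply carried to the right-hand side. The constantly hyperbolic half, however, has a genuine gap. The paper proves it by an energy argument using the degenerate Kreiss-type symmetrizer $S$ of M\'etivier--Zumbrun \cite{mz05} (properties (i)--(iii) in the text), whereas you split $U=U_1+U_2$ and try to estimate the two pieces separately. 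The estimate you call ``uniform stability of the stable bundle,'' namely $\gamma\int_0^\infty|e^{x_d\mathcal G}v|^2dx_d\lesssim|v|^2$ for $v\in E^s$ uniformly on $S^d_+$, \emph{is} (the homogeneous case of) the lemma you are asked to prove: mere continuity of $E^s$ up to $\overline S^d_+$ carries no decay rates, and near glancing frequencies the restriction of $e^{x_d\mathcal G}$ to $E^s$ involves nearly coalescing modes whose real parts degenerate like fractional powers of $\gamma$, so the only known route to a uniform bound is the quantitative block-structure analysis of Kreiss/M\'etivier --- exactly the input the paper imports as the symmetrizer. Asserting this estimate and deferring its proof leaves the core of the statement unproven.

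Moreover, the one quantitative step you do spell out is false. The claimed refinement
\[
 \int_\R\bigl|(i\xi_dA^d-G)^{-1}\bigr|_S^2\,d\xi_d\lesssim\frac1\gamma
\]
fails at glancing frequencies. Take the $2\times2$ reduction of the wave equation from Section \ref{iv} ($A^d=I_2$) at a glancing point $\tau^2=|\eta|^2$: then $\det(i\xi_dI-G)=-(\xi_d^2+\gamma^2+2i\tau\gamma)$ while the adjugate matrix has an entry identically equal to $1$, so $|(i\xi_dI-G)^{-1}|_S^2\gtrsim\bigl((\xi_d^2+\gamma^2)^2+4\tau^2\gamma^2\bigr)^{-1}$, and the $\xi_d$-integral of the right-hand side is of order $\gamma^{-3/2}$, not $\gamma^{-1}$ (the region $|\xi_d|\lesssim\sqrt\gamma$ alone contributes $\sqrt\gamma\cdot\gamma^{-2}$). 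Consequently your Cauchy--Schwarz bound $|U_1(0)|^2\lesssim\gamma^{-1}\|\hat f\|^2_{L_2(\R_+)}$ is not justified by this route; that trace bound is true, but within your architecture it is precisely the lemma applied to $U_1$ (using $QU_1(0)=0$, which part of your argument is fine since $A^d$ is invertible), so assuming it is circular. In short: the symmetric case stands, but the constantly hyperbolic case needs the M\'etivier--Zumbrun symmetrizer (or an equivalent block-structure construction) rather than the resolvent and semigroup bounds as you have stated them.
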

\begin{proof}
 Suppose that $P$ is symmetric hyperbolic. We know that $\hat{f}(\tau-i\gamma,\eta,\cdot)$ is square integrable in $x_d$. Taking the real part of the inner product of $U$ with the equation
 \[
  A^d \frac{\partial U}{\partial x_d} = -i \left[ A^0\tau + \sum_{j=1}^{d-1} A^j\eta_j \right]U -\gamma A^0U + \hat{f}
 \]
 in $L_2(0,\infty)$ gives
 \[
  \frac{1}{2}\Re \int_0^\infty \frac{d}{dx_d}[U^HA^d U] dx_d = -\gamma \int_0^\infty U^HA^0 U dx_d + \Re\int_0^\infty U^H \hat{f} dx_d\;.
 \]
 Integrating by parts on the left-hand side, applying the positive definiteness of $A^0$ in the first term on the right-hand side results in
 \[
  \gamma \int_0^\infty |U|^2 dx_d + |A_- U_-(0)|^2 \lesssim \Re\int_0^\infty U^H \hat{f} dx_d + |A_+U_+(0)|^2 \;.
 \]
An application of the Cauchy-Schwarz inequality in the first term on the right-hand side finishes the proof in the symmetric hyperbolic case.

 In the case of constantly hyperbolic operators with $\det A^d \neq 0$, the differential equation (\ref{9}) can be symmetrized by means of a Fourier multiplier $S(\tau,\gamma,\xi)$ which is bounded and homogeneous in $(\tau,\eta,\gamma)\in \R^d \times \R_+$ of degree zero and has the properties
 \begin{itemize}
  \item[(i)] $S^H = S$,
  \item[(ii)] $\Re \left (S [A^d]^{-1} G(\tau-i\gamma,\eta)\right) \gtrsim \gamma$,
  \item[(iii)] There exists a positive constant $C>0$ such that $S+ C Q^H Q \gtrsim I_N$,
 \end{itemize}
   \cite[Corollary 5.2]{mz05}. The estimate is obtained by applying the symmetrizer $S$ to the equation
 \[
  \frac{\partial U}{\partial x_d} = [A^d]^{-1}GU  + [A^d]^{-1}\hat{f}
 \]
 and then taking the real part of the inner product of the resulting equation with $U$ in $L_2(0,\infty)$. Relying on (i) and (ii), one arrives at
 \[
  \frac{1}{2}\Re \int_0^\infty \frac{d}{dx_d}[U^HS U] dx_d - \Re\int_0^\infty U^H S[A^d]^{-1}\hat{f} dx_d \gtrsim \gamma \int_0^\infty |U|^2 dx_d \;.
 \]
 The proof is finished by applying property (iii) of the symmetrizer to the first term on the left-hand side and the Cauchy-Schwarz inequality and the boundedness of $S$ to the second term on the left-hand side.
\end{proof}
 We like to point out that the first estimate of the Lemma shows that the $\mu \times N$ matrix $\left[ \begin{matrix} 0 & A_+ & 0 \end{matrix} \right]$ satisfies the Kreiss-Sakamoto condition with power $s=0$.
\subsection{The estimate in the Fourier domain.} In this subsection we will apply Lemma \ref{F} and derive the
 estimate
 \begin{multline}\label{12}
  \gamma \int_0^\infty |U(\tau-i\gamma,\eta,x_d)|^2 dx_d +|A^d U(\tau-i\gamma,\eta,0)|^2 \\ \lesssim \frac{(\tau^2+\gamma^2+|\eta|^2)^s}{\gamma^{1+2s}}\int_0^\infty |\hat{f}(\tau-i\gamma,\eta,x_d)|^2 dx_d + \frac{(\tau^2+\gamma^2+|\eta|^2)^s}{\gamma^{2s}}|\hat{g}(\tau-i\gamma,\eta)|^2
 \end{multline}
 for $\gamma\ge \underline{\gamma}$ which implies the estimate of Theorem \ref{A} in view of Parseval's identity.

Because of Lemma \ref{F} it will suffice to establish
\begin{equation}\label{30}
 |A^d_+U_+(0)|^2 \lesssim \frac{(\tau^2+\gamma^2+|\eta|^2)^s}{\gamma^{1+2s}}\|\hat{f}\|_{L_2(\R_+)}^2 + \frac{(\tau^2+\gamma^2+|\eta|^2)^s}{\gamma^{2s}}|\hat{g}|^2
\end{equation}
in the symmetric case and
\begin{equation}\label{32}
 |Q(\tau-i\gamma,\eta)U(0)|^2 \lesssim \frac{(\tau^2+\gamma^2+|\eta|^2)^s}{\gamma^{1+2s}}\|\hat{f}\|_{L_2(\R_+)}^2+ \frac{(\tau^2+\gamma^2+|\eta|^2)^s}{\gamma^{2s}}|\hat{g}|^2
\end{equation}
in the strictly hyperbolic case.

In the symmetric hyperbolic case we need to find an estimate for $|A^d_+U_+(0)|^2$. With $K=\left[ \begin{matrix} 0 & A_+ & 0 \end{matrix} \right] \Pi_{E^s}$, introduce
\[
 V(x_d) =  \frac{1}{2\pi i} \int_{\Gamma_-} e^{x_d\zeta}(\zeta A^d - G)^{-1} A^d d\zeta K^H(KK^H)^{-1} A_+U_+(0)\;,
\]
which is a solution to the equation (\ref{9}) with $\hat{f}=0$ and $A_+V_+(0)= A_+U_+(0)$.  Since $V(0)\in E^s(\tau-i\gamma,\eta) $ one gets from condition (\ref{5}) that
\begin{equation}\label{33}
 |BV(0)| \gtrsim \frac{\gamma^s}{(\tau^2+\gamma^2+|\eta|^2)^{s/2}}|A^d V(0)|
\end{equation}
because of homogeneity. Using now  $|BW| \lesssim |A^d W|$ because of $N(A^d)\subset N(B)$ and $BU(0)=\hat{g}$ we obtain, by applying the first inequality of Lemma \ref{F} to the function $U-V$ that,
\[
 \begin{split}
  |A_+ U_+(0)|^2& =  |A_+V_+(0)|^2\le |A^d V(0) |^2 \lesssim  \frac{(\tau^2+\gamma^2+|\eta|^2)^{s}}{\gamma^{2s}} |BV(0)|^2 \\
  & \lesssim  \frac{(\tau^2+\gamma^2+|\eta|^2)^{s}}{\gamma^{2s}} \big|B[U(0)-V(0)]\big|^2 + \frac{(\tau^2+\gamma^2+|\eta|^2)^{s}}{\gamma^{2s}} |BU(0)|^2 \\
    & \lesssim  \frac{(\tau^2+\gamma^2+|\eta|^2)^{s}}{\gamma^{2s}}\left|A^d[U(0)-V(0)]\right|^2+ \frac{(\tau^2+\gamma^2+|\eta|^2)^s}{\gamma^{2s}}|\hat{g}|^2 \\
    & \lesssim
 \frac{(\tau^2+\gamma^2+|\eta|^2)^s}{\gamma^{2s+1}}\|\hat{f}\|^2_{L_2(\R_+)}+ \frac{(\tau^2+\gamma^2+|\eta|^2)^s}{\gamma^{2s}}|\hat{g}|^2  \;.
 \end{split}
\]
The proof of (\ref{32}) uses the same idea with a few modifications which may be worthwhile to be pointed out. This time we set
\[
 V(x_d) = \frac{1}{2\pi i} \int_{\Gamma_-} e^{x_d \zeta}(\zeta A^d - G)^{-1} A^d d\zeta\, QU(0)\;,
\]
which is a solution to the the differential equation (\ref{9}) and $V(0) = \Pi_{E^s}QU(0)=QU(0) $. This time we use again the Kreiss-Sakamoto condition (\ref{33}) and the second estimate of Lemma \ref{F} applied to $U-V$.
\[
 \begin{split}
  |Q(\tau-i\gamma,\eta)U(0)|^2& =  |V(0)|^2 \lesssim  \frac{(\tau^2+\gamma^2+|\eta|^2)^{s}}{\gamma^{2s}} |BV(0)|^2
    \\
  & \lesssim  \frac{(\tau^2+\gamma^2+|\eta|^2)^{s}}{\gamma^{2s}} \big|B[U(0)-V(0)]\big|^2 + \frac{(\tau^2+\gamma^2+|\eta|^2)^{s}}{\gamma^{2s}} |BU(0)|^2 \\
    & \lesssim  \frac{(\tau^2+\gamma^2+|\eta|^2)^{s}}{\gamma^{2s}} |U(0)-V(0)|^2+ \frac{(\tau^2+\gamma^2+|\eta|^2)^s}{\gamma^{2s}}|\hat{g}|^2 \\
    & \lesssim
 \frac{(\tau^2+\gamma^2+|\eta|^2)^s}{\gamma^{2s+1}}\|\hat{f}\|^2_{L_2(\R_+)}+ \frac{(\tau^2+\gamma^2+|\eta|^2)^s}{\gamma^{2s}}|\hat{g}|^2  \;.
 \end{split}
\]
We integrate formula (\ref{12}) in $\tau$ and $\eta$ and obtain using Parseval's identity
\begin{multline*}
   \gamma \int_{R^{d+1}_+}|U(\tau-i\gamma,\eta,x_d)|^2d\tau d\eta dx_d + \int_{R^{d}}|A^d U(\tau-i\gamma,\eta,0)|^2d\tau d\eta\\ \lesssim \frac{1}{\gamma^{1+2s}}\|e^{-\gamma t}f\|^2_{s,\gamma} + \frac{1}{\gamma^{2s}}|e^{-\gamma t}g|^2_{s,\gamma}
\end{multline*}
for $\gamma\ge \underline{\gamma}$. This proves that $U \in L_2(\R_+,L_2(\R^{d}_{\tau,\eta}))$ for $\gamma\ge \underline{\gamma}$. Together with the analyticity of $U$ in $\tau-i\gamma$ in the lower half plane, this shows that $U$ is the Fourier transform of a function $e^{-\gamma t}u\in L_2(\R^{n+1}_+)$ for $\gamma \ge \underline{\gamma}$. Letting $\gamma \to \infty$ shows that $u$ vanishes for $t<0$.
\begin{remark}\label{D}
 The proof shows that the result of Theorem \ref{A} can be shifted to other Sobolev norms with respect to the tangential variables. For example, for $f\in L_2(\R^{d+1})$ and $g\in L_2(\R^d)$ supported in $t\ge 0$, there exists a unique solution $u\in L_{2}(\R_+,H_{loc}^{-s}(\R^d))$, vanishing for $t<0$, to the boundary problem (\ref{3}) and the estimate
  \[
   \gamma \|e^{-\gamma t}u\|^2_{-s,\gamma} + |e^{-\gamma t}A^d u(0)|^2_{-s,\gamma} \lesssim \frac{1}{\gamma^{1+2s}}\|e^{-\gamma t}f\|^2_{0} + \frac{1}{\gamma^{2s}}|e^{-\gamma t}g|^2_{0}
  \]
  holds for $\gamma$ sufficiently large.
\end{remark}
\setcounter{equation}{0}
\section{Examples}\label{iv}
\subsection{The wave equation with Neumann boundary condition}
 Consider the boundary value problem
 \begin{equation}\label{16}
  \partial_t^2 u - \Delta u = f \mbox{ in } \R^{d+1}_+=\{(t,x)\in \R^{d+1}\;:\;x_d>0\},\quad \partial_d u = g \mbox{ in } \{x_d=0\}
 \end{equation}
 where $\Delta$ denotes the Laplacian in $d\ge 2$ (space) variables. For the variable $w=e^{-\gamma t} u$ one obtains the equation
 \[
  (\partial_t +\gamma)^2 w - \Delta w = e^{-\gamma t} f\;.
 \]
 This second order equation can be reduced to a first order $2\times 2$ system which is strictly hyperbolic, see for example \cite[Chapter 7]{chazarainpiriou82}. With $v= (v_1,v_2)^T$ where $v_1 = \partial_d w$, $v_2= \Lambda w$, with
 \[
  \Lambda w = \frac{1}{(2\pi)^3} \int_{\R^d} e^{i(\tau t + y\cdot \eta)} \sqrt{\tau^2+\gamma^2+ |\eta|^2}\;\hat{w}(\tau,\eta,x_d) d\tau d\eta =\sqrt{\gamma^2 - \partial_t^2 - \Delta_y} \;w
 \]
 one obtains the $2\times 2$ system
 \[
  \partial_d v = \left[ \begin{matrix} 0 & [(\partial_t+\gamma)^2 - \Delta_y]\Lambda^{-1} \\ \Lambda & 0  \end{matrix} \right] v - \left[ \begin{matrix}e^{-\gamma t} f \\ 0 \end{matrix} \right]\;,
 \]
 where $\Delta_y$ the Laplacian in the spatially tangential variables. Since $A^d = I_2$, the stable subspace $E^s(\tau-i\gamma,\eta)$ of the matrix pair $(A^d,G)$ for $(\tau,\eta,\gamma) \in S^d_+$ is spanned by an eigenvector of the matrix
 \[
  G(\tau-i\gamma,\eta)=\left[ \begin{matrix} 0 & -(\tau-i\gamma)^2 + |\eta|^2 \\ 1 & 0  \end{matrix} \right]
 \]
 corresponding to the eigenvalue with negative real part. The eigenvalues are \\$\lambda = \pm\sqrt{|\eta|^2 -(\tau-i\gamma)^2}$ and in the sequel the square root symbol stands always for the root with positive real part. The eigenvector corresponding to the eigenvalue with negative real part is the vector
 \[
   z=\left[ \begin{matrix}\sqrt{|\eta|^2 -(\tau-i\gamma)^2}\\ -1 \end{matrix}\right]
 \]
 and $E^s(\tau-i\gamma,\eta)=\mbox{span}[z]$. There exists a constant $c$ such that $1 \le |z| \le c$ for all $(\tau,\eta,\gamma) \in S^d_+$. In order to establish the power of the Kreiss-Sakamoto condition (\ref{5}), observe that
 \[
  Bz = \sqrt{|\eta|^2 -(\tau-i\gamma)^2}\;.
 \]
 We claim that $|Bz| \gtrsim \gamma^{1/2}$ and that the power $1/2$ cannot be lowered. To prove this claim, use
 \[
  |\sqrt{|\eta|^2 -(\tau-i\gamma)^2}|^2 = ||\eta|^2 -(\tau-i\gamma)^2|
 \]
 and that for $(\tau,\eta,\gamma)\in S^d_+$,
 \begin{equation}\label{23}
  \begin{split}
    \left||\eta|^2 -(\tau-i\gamma)^2\right|^2 =& (\tau^2-\gamma^2-|\eta|^2)^2 + 4\gamma^2\tau^2  =  (\tau^2 - |\eta|^2)^2 +\gamma^4 + 2\gamma^2|\eta|^2 +   2\tau^2 \gamma^2 \\ \ge &  \gamma^2(\gamma^2 + 2\tau^2 + 2|\eta|^2)  \ge    \gamma^2
  \end{split}
 \end{equation}
 where the power in the last inequality is optimal. Hence, $|Bv|\gtrsim \gamma^{1/2}|v|$ for all $v\in E^s$.
 Applying Theorem \ref{A} one concludes that for $f\in L_2(\R_+,H^{1/2}(\R^d))$ and $g\in H^{1/2}(\R^d)$ there exists a unique solution $u\in H_{loc}^{1}(\R^{d+1})$ to the boundary value problem (\ref{16}) and $u\Big|_{\{x_d=0\}}\in H^1_{loc}(\R^d)$. Or, relying on Remark \ref{D} we infer that for $f\in L_2(\R^{d+1}_+)$ and $g\in L_2(\R^d)$ there exists a unique solution $u\in H_{loc}^{1/2}(\R^{d+1})$ to the boundary value problem (\ref{16}) and $u\Big|_{\{x_d=0\}}\in H^{1/2}_{loc}(\R^d)$.

 As mentioned in the introduction, this result is stronger than the general result for hyperbolic problems with conservative boundary condition \cite{el12} applied to this problem. Moreover, the regularity of the trace $u\Big|_{\{x_d=0\}}\in H^{1/2}_{loc}(\R^d)$ is even stronger than the results for the wave equation derived more than 20 years ago by Lasiecka and Triggiani \cite[Main Theorem 1.3]{lt90}. On the other hand, this work contains better interior regularity results.
\subsection{The oblique derivative problem for the wave equation}
We consider the problem
\begin{equation}\label{17}
  \partial_t^2 u - \Delta u = f \mbox{ in } \R^{d+1}_+=\{(t,x)\in \R^{d+1}\;:\;x_d>0\},\quad \partial_d u+b\cdot \nabla_y u = g \mbox{ in } \{x_d=0\}
\end{equation}
where $b\in \R^{d-1}$, $d\ge 2$, $b\neq 0$, and $\nabla_y u = (\partial_{x_1} u,..., \partial_{x_{d-1}}u)$ is the spatially tangential gradient. For a large part the analysis is the same as in the previous examples. However, this time we have
\[
 Bz = \sqrt{|\eta|^2 -(\tau-i\gamma)^2} + i b\cdot\eta
\]
and we will show that we have the estimate $|Bz|\gtrsim \gamma$ which cannot be improved.
Following Chazarain and Piriou \cite[section 13]{cp72} one estimates
\[
 |Bz| \ge |\Re Bz| \gtrsim \gamma\;,
\]
which relies on the inequality $\Re \sqrt{c^2 - (a-di)^2}\ge d$ for $a,c,d \in \R$ and $d\ge 0$. To prove this inequality, set
\[
 \sqrt{c^2 - (a-di)^2} = \alpha +\beta i,\quad \alpha\ge 0\;,
\]
which implies $c^2+d^2-a^2 +2ad i = \alpha^2-\beta^2 +2\alpha\beta i$ and hence $ad=\alpha\beta$. If $a=0$, there is nothing to prove. To prove $\alpha\ge d$ when $a\neq 0$ we argue by contradiction. Suppose that $0\le \alpha < d$. Then $\beta>a>0$ or $\beta<a< 0$ and $\beta^2 - \alpha^2 > a^2 -d^2$ which contradicts $\beta^2 - \alpha^2 = a^2 -d^2 -c^2 \le a^2-d^2$.

Next we show that this estimate cannot be improved. Suppose that $|Bz| \gtrsim \gamma^\theta$ for some $\theta \in [0,1)$. Choose a sequence $\{z_n\} \subset E^s(\tau-i\gamma,\eta)$ such that $z_n=(\tau_n,\eta_n,\gamma_n) \in S^d_+$, $(b\cdot \eta_n)^2 = \tau_n^2-|\eta_n|^2\gtrsim 1$, $b\cdot\eta_n<0$, $\tau_n>0$, and $\gamma_n\to 0$ for $n\to \infty$. Then
\[
 \begin{split}
  \lim_{n\to \infty}\frac{Bz_n}{\gamma_n^\theta}&=\lim_{n\to \infty} \frac{\sqrt{|\eta_n|^2 -(\tau_n-i\gamma_n)^2} + i b\cdot\eta_n}{\gamma^\theta_n} = \lim_{n\to \infty} \frac{|\eta_n|^2 -(\tau_n-i\gamma_n)^2 + (b\cdot\eta_n)^2}{(\sqrt{|\eta_n|^2 -(\tau_n-i\gamma_n)^2} - i b\cdot\eta_n)\gamma^\theta_n} \\
  &= \lim_{n\to \infty} \frac{\gamma_n^{1-\theta}(2\tau_ni+\gamma_n)}{\sqrt{|\eta_n|^2 -(\tau_n-i\gamma_n)^2} - i b\cdot\eta_n}=0
 \end{split}
\]
which shows that the estimate $|Bz|\gtrsim \gamma$ is optimal.
\subsection{The isotropic Maxwell equations}
Let $(E,H)$ denote the electromagnetic field in space time and let $\ve$ and $\mu$ be positive constants which denote the electric permittivity and the magnetic permeability of the underlying medium, respectively. The electromagnetic field satisfies Maxwell's equations which is a $6\times 6$ hyperbolic system of first order,
\begin{equation}\label{19}
 \begin{split}
  \ve \partial_t E - \nabla\times H &= f_1 \\ \mu \partial_t H + \nabla\times E &= f_2
 \end{split}
  \qquad \mbox{ in } \R^4_+=\{ (t,x)\in \R^4 \;:\; x_3>0 \}
\end{equation}
The forcing term $f=(f_1,f_2)$ represents current densities, usually $f_2\equiv 0$. The boundary condition is of the form
\begin{equation}\label{20}
 \nu \times E = g \mbox{ in } \{x_3 = 0\}
\end{equation}
and models a current flux along the boundary of the medium. Here $\nu=(0,0,-1)^T$ is the exterior unit normal vector and
\begin{equation}\label{34}
 A^3\left[\begin{matrix} e \\ h \end{matrix} \right] = \left[\begin{matrix} -\nu \times h \\ \nu\times e \end{matrix} \right]\;.
\end{equation}
Note that the boundary is characteristic since $\det  A^3 = 0$ and that $N(A^3)\subset N(B)$. It is known that the boundary condition does not satisfy the Kreiss-Sakamoto condition \cite[Section 2]{mo75},\cite{el09}. However, it is a conservative boundary condition and there is a well-posedness result with a loss of one derivative on the boundary \cite[Theorems 1.2,1.3,1.4]{el12}.

Our aim is to show that condition (\ref{5}) is satisfied with $s=1$ and that $s$ cannot be smaller. To determine the stable subspace $E^s(\tau-i\gamma,\eta)$ for $(\tau,\eta,\gamma) \in S^3_+$ consider the solutions to the system
\[
 \begin{split}
  (\tau - i\gamma ) \ve e - \zeta \times h &= 0 \\ (\tau - i\gamma ) \mu h + \zeta \times e &= 0 \;.
 \end{split}
\]
where $\zeta = (\eta_1,\eta_2,-i\xi)^T$. Hence,
\[
 e= \frac{\zeta \times h}{(\tau -i\gamma)\ve} \;,\quad  h =-\frac{\zeta \times e}{(\tau-i\gamma)\mu} = \frac{ (\zeta\cdot\zeta) h}{(\tau-i\gamma)^2\ve\mu}\;, \quad \ \zeta\cdot h  = 0\;, \quad  \zeta\cdot e  = 0\;,
\]
which gives $\ve\mu (\tau-i\gamma)^2 = \zeta\cdot \zeta$. The (double) eigenvalue with negative real part is $\xi = -\sqrt{|\eta|^2 - \ve\mu(\tau-i\gamma)^2}$ and the stable subspace is the two-dimensional vector space
\[
 \begin{split}
  E^s(\tau-i\gamma,\eta) &= \left\{ \left[ \begin{matrix} e \\ h \end{matrix}\right] \in \C^6 \;:\; \zeta\cdot h  = 0,\; \ve e= \frac{\zeta\times h}{\tau - i \gamma},\;\ve\mu (\tau-i\gamma)^2 = |\eta|^2 - \xi^2 \right\}\\
  &=\left\{ \left[ \begin{matrix} e \\ h \end{matrix}\right] \in \C^6 \;:\;  \zeta \cdot e  = 0,\; \mu h=- \frac{\zeta\times e}{\tau - i \gamma},\;\ve\mu (\tau-i\gamma)^2 = |\eta|^2 - \xi^2\right\}\;.
 \end{split}
\]
An orthogonal basis of the stable subspace is given by
\[
 w_1 = \left[ \begin{matrix} \mu(\tau - i \gamma) \zeta^\perp \\ -\zeta \times \zeta^\perp \end{matrix} \right]\;, \qquad
 w_2 = \left[ \begin{matrix} \zeta \times \zeta^\perp \\ \ve(\tau - i \gamma) \zeta^\perp \end{matrix} \right]\;,
\]
where $\zeta^\perp$ is a real unit vector which is perpendicular to both $\Re \zeta$ and $\nu$. There exists two constants $c_1$ and $c_2$ such that $c_1 \le |w_1| \le c_2$ and $c_1 \le |w_2| \le c_2$ for all $(\tau,\eta,\gamma)\in S^3_+$. We will show that
\begin{equation}\label{22}
 |B(\alpha w_1 + \beta w_2)| \ge \gamma|A^3(\alpha w_1 + \beta w_1)|
\end{equation}
where $\alpha,\beta \in \C$ such that $|\alpha|^2 + |\beta|^2 =1$ and $(\tau,\eta,\gamma)\in S^3_+$. This establishes the Kreiss-Sakamoto condition with power $s=1$.

To prove (\ref{22}), compute as in (\ref{23})
\[
 \begin{split}
 |B(\alpha w_1 + \beta w_2)|^2 &= |\alpha \mu (\tau-i\gamma) \nu\times \zeta^\perp + \beta \zeta^\perp (i\xi)|^2 = |\alpha|^2 \mu^2  (\tau^2+\gamma^2) + |\beta|^2 |\xi|^2 \\& = |\alpha|^2(\tau^2+\gamma^2) + |\beta|^2\sqrt{\big||\eta|^2-\ve\mu(\tau-i\gamma)^2\big|^2}\\ & \gtrsim |\alpha|^2(\gamma^2+\tau^2) + |\beta|^2\left[ \big|\ve\mu \tau^2 -|\eta|^2\big| + \gamma\right] \gtrsim \gamma^2 \;,
 \end{split}
\]
and using (\ref{34}), compute
\[
 \begin{split}
  |A^3(\alpha w_1 + \beta w_2)|^2&=(\tau^2+\gamma^2)[\ve^2|\beta|^2+ \mu^2|\alpha|^2] + (|\alpha|^2+|\beta|^2)|\xi|^2\eqsim \tau^2 +\gamma^2 + |\xi|^2 \\ &= \tau^2+\gamma^2 +\sqrt{\big||\eta|^2-\ve\mu(\tau-i\gamma)^2\big|^2}\\
  &\eqsim \tau^2 + \gamma^2 + \left[ \big|\ve\mu \tau^2 -|\eta|^2\big| + \gamma\right] \eqsim 1\;.
 \end{split}
\]
where we write $a\eqsim b$ if and only if $a\lesssim b$ and $a\gtrsim b$. This estimate cannot be improved.

Applying Theorem \ref{A} we know that for $f\in H^{1}(\R^4_+)$, $g\in H^{1}(\R^3)$ which vanish for $t<0$, there exists a unique solution $(E,H)\in L_{2,loc}(\R^4_+)$ to the boundary value problem (\ref{19}),(\ref{20}) and $\nu\times H\Big|_{\{x_3=0\}} \in L_{2,loc}(\R^3)$. This result is certainly not better than the aforementioned results by the author which apply even to variable coefficients. Our work here shows that the results in \cite{el12} are sharp in the sense that a loss of one derivative on the boundary occurs.

\bibliographystyle{plain}
\bibliography{article,mylib}

\begin{thebibliography}{10}

\bibitem{gav02}
Sylvie Benzoni-Gavage, Fr{\'e}d{\'e}ric Rousset, Denis Serre, and K.~Zumbrun.
\newblock Generic types and transitions in hyperbolic initial-boundary-value
  problems.
\newblock {\em Proc. Roy. Soc. Edinburgh Sect. A}, 132(5):1073--1104, 2002.

\bibitem{campbell80}
S.~L. Campbell.
\newblock {\em Singular systems of differential equations}, volume~40 of {\em
  Research Notes in Mathematics}.
\newblock Pitman (Advanced Publishing Program), Boston, Mass., 1980.

\bibitem{cp72}
Jacques Chazarain and Alain Piriou.
\newblock Caract\'erisation des probl\`emes mixtes hyperboliques bien pos\'es.
\newblock {\em Ann. Inst. Fourier (Grenoble)}, 22(4):193--237, 1972.

\bibitem{chazarainpiriou82}
Jacques Chazarain and Alain Piriou.
\newblock {\em Introduction to the theory of linear partial differential
  equations}, volume~14 of {\em Studies in Mathematics and its Applications}.
\newblock North-Holland Publishing Co., Amsterdam, 1982.
\newblock Translated from the French.

\bibitem{coulombel04}
Jean-Fran{\c{c}}ois Coulombel.
\newblock Weakly stable multidimensional shocks.
\newblock {\em Ann. Inst. H. Poincar\'e Anal. Non Lin\'eaire}, 21(4):401--443,
  2004.

\bibitem{co11}
Jean-Fran{\c{c}}ois Coulombel.
\newblock The hyperbolic region for hyperbolic boundary value problems.
\newblock {\em Osaka J. Math.}, 48(2):457--469, 2011.

\bibitem{el09}
Matthias Eller.
\newblock On boundary regularity of solutions to {M}axwell's equations with a
  homogeneous conservative boundary condition.
\newblock {\em Discrete Contin. Dynam. Systems Series S}, 2(3):473--481, 2009.

\bibitem{el12}
Matthias Eller.
\newblock On symmetric hyperbolic boundary problems with nonhomogeneous
  conservative boundary conditions.
\newblock {\em SIAM Journal of Mathematical Analysis}, 4(1):1925--1949, 2012.

\bibitem{gmwz07}
Olivier Gu{\`e}s, Guy M{\'e}tivier, Mark Williams, and Kevin Zumbrun.
\newblock Uniform stability estimates for constant-coefficient symmetric
  hyperbolic boundary value problems.
\newblock {\em Comm. Partial Differential Equations}, 32(4-6):579--590, 2007.

\bibitem{he63}
Reuben Hersh.
\newblock Mixed problems in several variables.
\newblock {\em J. Math. Mech.}, 12:317--334, 1963.

\bibitem{kr70}
Heinz-Otto Kreiss.
\newblock Initial boundary value problems for hyperbolic systems.
\newblock {\em Comm. Pure Appl. Math.}, 23:277--298, 1970.

\bibitem{lt90}
I.~Lasiecka and R.~Triggiani.
\newblock Sharp regularity theory for second order hyperbolic equations of
  {N}eumann type. {I}. {$L\sb 2$} nonhomogeneous data.
\newblock {\em Ann. Mat. Pura Appl. (4)}, 157:285--367, 1990.

\bibitem{mo75}
Andrew Majda and Stanley Osher.
\newblock Initial-boundary value problems for hyperbolic equations with
  uniformly characteristic boundary.
\newblock {\em Comm. Pure Appl. Math.}, 28(5):607--675, 1975.

\bibitem{me00}
Guy M{\'e}tivier.
\newblock The block structure condition for symmetric hyperbolic systems.
\newblock {\em Bull. London Math. Soc.}, 32(6):689--702, 2000.

\bibitem{me14}
Guy M{\'e}tivier.
\newblock On the ${L}_2$ well-posedness of hyperbolic boundary value problems.
\newblock {\em Preprint}, 2014.

\bibitem{mz05}
Guy M{\'e}tivier and Kevin Zumbrun.
\newblock Hyperbolic boundary value problems for symmetric systems with
  variable multiplicities.
\newblock {\em J. Differential Equations}, 211(1):61--134, 2005.

\bibitem{oh81}
Toshio Ohkubo.
\newblock Regularity of solutions to hyperbolic mixed problems with uniformly
  characteristic boundary.
\newblock {\em Hokkaido Math. J.}, 10(1):93--123, 1981.

\bibitem{ra71}
James~V. Ralston.
\newblock Note on a paper by {K}reiss.
\newblock {\em Comm. Pure Appl. Math.}, 6:759--762, 1971.

\bibitem{sakamoto82}
Reiko Sakamoto.
\newblock {\em Hyperbolic boundary value problems}.
\newblock Cambridge University Press, Cambridge, 1982.
\newblock Translated from the Japanese by Katsumi Miyahara.

\bibitem{serre00}
Denis Serre.
\newblock {\em Systems of conservation laws. 2}.
\newblock Cambridge University Press, Cambridge, 2000.
\newblock Geometric structures, oscillations, and initial-boundary value
  problems, Translated from the 1996 French original by I. N. Sneddon.

\bibitem{stewartsun90}
G.~W. Stewart and Ji~Guang Sun.
\newblock {\em Matrix perturbation theory}.
\newblock Computer Science and Scientific Computing. Academic Press Inc.,
  Boston, MA, 1990.

\end{thebibliography}
\end{document}